\def\arXiv#1{arXiv:\href{http://arXiv.org/abs/#1}{#1}}
\newtheorem{theorem}{Theorem}[section]
\newtheorem{lemma}[theorem]{Lemma}
\numberwithin{equation}{section}
\numberwithin{table}{section}
\newcommand{\F}{\mathbb{F}}
\newcommand{\R}{\mathbb{R}}
\newcommand{\RM}{\mathop{\mathrm{RM}}}
\title[Improved kissing numbers in $17$ through $21$ dimensions]{Improved kissing numbers in seventeen\\ through twenty-one dimensions}
\author{Henry Cohn}
\address{Microsoft Research New England\\
One Memorial Drive\\
Cambridge, MA 02142}
\curraddr{Department of Mathematics\\
Massachusetts Institute of Technology\\
Cambridge, MA 02139}
\email{cohn@mit.edu}
\author{Anqi Li}
\address{Department of Mathematics\\Stanford University\\ Stanford, CA 94305}
\thanks{The bulk of this work was completed while Li was a summer intern at Microsoft Research New England.}
\email{aqli@stanford.edu}
\begin{document}

\begin{abstract}
We prove that the kissing numbers in $17$, $18$, $19$, $20$, and $21$ dimensions are at least $5730$, $7654$, $11692$, $19448$, and $29768$, respectively. The previous records were set by Leech in 1967, and we improve on them by $384$, $256$, $1024$, $2048$, and $2048$. Unlike the previous constructions, the new configurations are not cross sections of the Leech lattice minimal vectors. Instead, they are constructed by modifying the signs in the lattice vectors to open up more space for additional spheres. 
\end{abstract}

\maketitle

\section{Introduction}

The \emph{kissing problem} in $\R^n$ asks how many unit spheres can be
arranged tangent to a central unit sphere so that their interiors are
disjoint; it can be viewed as a packing problem for spherical caps or
as a local analogue of the sphere packing problem in Euclidean space. One can
also formulate the kissing problem in terms of the points of tangency: how
large a subset $\mathcal{C}$ of the unit sphere $S^{n-1}$ is there such that
$\langle x,y \rangle \le 1/2$ for all $x,y \in \mathcal{C}$ with $x \ne y$?
Here, $\langle \cdot,\cdot\rangle$ denotes the usual inner product on $\R^n$.
In this formulation, which we will use in this paper, the goal is a
\emph{spherical code} with minimal angle at least $\pi/3 = \arccos(1/2)$.

The exact kissing number is known for $n \in \{1, 2, 3, 4, 8, 24\}$, but no other
cases; see Table~\ref{table:kissing} for the best lower and upper bounds
currently known. For $n \le 8$, optimal kissing configurations appear to
involve familiar mathematical structures: the $A_1$, $A_2$, $A_3$ (or,
equivalently, $D_3$), $D_4$, $D_5$, $E_6$, $E_7$, and $E_8$ root systems are
optimal or conjectured to be optimal, although they are not always unique
(see \cite{CJKT11} and \cite{Sz23} for the most general constructions known).
For $8 < n < 16$, there have been a number of diverse constructions, most
recently by Ganzhinov \cite{G22} and AlphaEvolve \cite{DeepMind25}. For $n=16$ the answer may be
the minimal vectors in the Barnes-Wall lattice, and for $n=24$, the answer is
the minimal vectors in the Leech lattice. However, for $16 < n < 24$, before
the present paper there had been no improvements on Leech's constructions
\cite{L67} from 1967. These configurations were all obtained as cross sections
of the $24$-dimensional kissing configuration; they can also be described as
the minimal vectors in the laminated lattices \cite{CS80}.

\begin{table}
\caption{Lower and upper bounds for kissing numbers, including this paper's
improved lower bounds in dimensions $17$ through $21$ and Ho's subsequent
improvement in dimension $19$. See \cite{C} for records in up to $48$ dimensions.}
\label{table:kissing}
\centering
\begin{tabular}{rrrr}
\toprule
Dimension & Lower bound & Upper bound & References\\
\midrule
$1$ & $2$ & $2$ & \\
$2$ & $6$ & $6$ & \\
$3$ & $12$ & $12$ & \cite{SW53} \\
$4$ & $24$ & $24$ & \cite{S01,M08} \\
$5$ & $40$ & $44$ & \cite{KZ73,MV10} \\
$6$ & $72$ & $77$ & \cite{KZ73,dLLdMK24} \\
$7$ & $126$ & $134$ & \cite{KZ73,MV10} \\
$8$ & $240$ & $240$ & \cite{KZ73,L79,OS79} \\
$9$ & $306$ & $363$ & \cite{LS71,MO18} \\
$10$ & $510$ & $553$ & \cite{G22, MO18} \\
$11$ & $593$ & $868$ & \cite{DeepMind25,dLL22} \\
$12$ & $840$ & $1355$ & \cite{LS71,dLL22} \\
$13$ & $1154$ & $2064$ & \cite{ZE99,dLL22} \\
$14$ & $1932$ & $3174$ & \cite{G22,dLL22} \\
$15$ & $2564$ & $4853$ & \cite{LS71,dLL22} \\
$16$ & $4320$ & $7320$ & \cite{BW59,dLL22} \\
$17$ & $5730$ & $10978$ & \cite{dLL22} \\
$18$ & $7654$ & $16406$ & \cite{dLL22} \\
$19$ & $11948$ & $24417$ & \cite{H26,dLL22} \\
$20$ & $19448$ & $36195$ & \cite{dLL22} \\
$21$ & $29768$ & $53524$ & \cite{dLL22} \\
$22$ & $49896$ & $80810$ & \cite{L67,dLL22} \\
$23$ & $93150$ & $122351$ & \cite{L67,dLL22} \\
$24$ & $196560$ & $196560$ & \cite{L67,L79,OS79} \\
\bottomrule
\end{tabular}
\end{table}

For $17 \le n \le 21$, we obtain improvements on these constructions.
Specifically, we improve Leech's lower bounds (which were $5346$, $7398$,
$10668$, $17400$, and $27720$) by $384$, $256$, $1024$, $2048$, and $2048$:

\begin{theorem} \label{thm:main}
The kissing numbers in $\R^{17}$, $\R^{18}$, $\R^{19}$, $\R^{20}$, and $\R^{21}$ are at least
$5730$, $7654$, $11692$, $19448$, and $29768$, respectively.
\end{theorem}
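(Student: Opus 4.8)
The plan is to prove each bound constructively, by exhibiting in $\R^n$ an explicit set $\mathcal{C}$ of unit vectors of the stated cardinality with $\langle x,y\rangle \le 1/2$ for all distinct $x,y \in \mathcal{C}$, i.e.\ a spherical code of minimal angle $\pi/3$. Since the targets exceed Leech's counts by exactly $384, 256, 1024, 2048, 2048$, the natural strategy is to begin from Leech's configuration---a cross section of the Leech lattice minimal vectors, equivalently the minimal vectors of the laminated lattice $\Lambda_n$---and to enlarge it. The difficulty is that in the cross section many pairs already meet at the minimal angle, so no new point can simply be inserted. Following the abstract, the idea is first to \emph{modify the signs} of the coordinates of the lattice vectors. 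A global sign change is an ambient isometry and does nothing to the inner products, so the modification must be \emph{vector-dependent}: each cross-section vector $v$ is replaced by $D_v v$ for a diagonal sign matrix $D_v$ depending on $v$. Choosing these sign changes coherently can strictly lower some of the inner products that were equal to $1/2$, opening up room into which the additional vectors are then placed.

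To keep the vector-dependent sign changes analyzable, I would organize them---and the new vectors---using the binary codes attached to the Barnes--Wall lattice $BW_{16}$ inside the ambient $24$-dimensional space, since the powers of two in the surpluses are exactly Reed--Muller sizes: $256 = 2^8$, $1024 = 2^{10}$, and $2048 = |\RM(2,4)|$. Concretely, the $16$ Barnes--Wall coordinates would be indexed by $\F_2^4$, a new candidate vector is prescribed by a codeword of the relevant $\RM(r,4)$ (which coordinates are nonzero, and with what signs) together with a choice on the remaining $n-16$ coordinates, and the sign modification of a cross-section vector is recorded as XOR with a fixed coset representative. Under this bookkeeping every inner product becomes a signed count over a support overlap, computable from codeword weights and intersection sizes. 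One then adjoins one block of new vectors per coset of the chosen Reed--Muller code, the number of cosets being precisely the required surplus in each dimension.

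The crux---and the step I expect to be the main obstacle---is verifying that the enlarged set is still a code of minimal angle $\pi/3$. Normalizing so that every vector has the Leech minimal norm $4$, the condition becomes $\langle x,y\rangle \le 2$ on the unnormalized vectors, and one must control three families of pairs. The old--old pairs are governed by the original cross section together with the sign change and reduce to weight computations in the underlying binary code; the new--new pairs reduce to the minimum distance of the Reed--Muller code indexing them. The delicate case is old--new: here one must show that the chosen coset representative has genuinely created the slack, i.e.\ that the signed overlap sum never exceeds the threshold. Since the number of pairs is enormous, I would first cut the verification down to a finite list of support-overlap patterns by exploiting the automorphism group acting on coordinates and signs (the relevant subgroup of $\mathrm{Aut}(BW_{16}) = 2^{1+8}_{+}.O_8^{+}(2)$, or the stabilizer in the Golay-code/Conway-group picture), and then either bound each surviving pattern by hand or certify the finitely many remaining cases by direct computation.

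Once validity is established, the count is immediate: Leech's count plus the number of adjoined cosets yields $5730, 7654, 11692, 19448, 29768$. The genuinely $n$-dependent input is how much slack the sign pattern creates and hence how large a block may be adjoined, which is why the surplus grows with $n$; I would handle all five dimensions through a single parametrized construction treating the extra $n-16$ coordinates uniformly, checking the extreme dimensions $17$ and $21$ most carefully, since there the overlaps are tightest and leave the least margin in the threshold $\langle x,y\rangle \le 2$.
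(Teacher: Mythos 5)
Your proposal is a research plan rather than a proof, and the essential mechanism is missing from it. You correctly sense that the enlargement must come from a vector-dependent sign modification of the known configurations, but you never identify what that modification is or why it creates room. The actual device is a parity switch: the codeword-supported vectors (entries $\pm 1$ on a weight-$8$ codeword of a constant-weight code $C$) are taken with an \emph{odd} number of minus signs instead of an even number. The new vectors are then not supported on Reed--Muller codewords at all; they are full-support vectors $v$ with $v_i = (-1)^{c_i}\sqrt{8/n}$ where $c$ ranges over a subcode of the dual of $C$ with minimal distance at least $n/4$ (obtained by shortening and puncturing the Golay code). Orthogonality of $c$ to $C$ forces $v$ to have an even number of minus signs on each codeword support, so the odd-parity vectors must disagree with $v$ in at least one sign, dropping the inner product from $8\sqrt{8/n}$ to at most $6\sqrt{8/n}\le 4$ for $n\ge 18$. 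Without this parity observation there is no reason the old--new inner products clear the threshold, and that is exactly the step you flag as ``the main obstacle'' and defer to computation.

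Your numerology also does not survive contact with the constructions. The surplus $384$ in dimension $17$ is not a power of two and is not a count of Reed--Muller cosets: it is $2\times 192$, where $192$ is the maximum size of a subcode of a certain $10$-dimensional code $C_{10}\subseteq\F_2^{4\times 4}$ with minimal distance $6$, proved optimal by an explicit linear programming bound and realized as six cosets of $\RM(1,4)$; two disjoint such subcodes index the added deep holes $((-1)^{c_1}2/3,\dots,(-1)^{c_{16}}2/3,\pm\sqrt{8}/3)$. Dimension $18$ requires yet another subcode ($C_8$, cut out by diagonal sum conditions) because the deep holes interfere with the extra $18$-dimensional vectors. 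None of this is reachable from ``adjoin one block per coset of $\RM(r,4)$,'' and the paper's verification is carried out by hand via the orthogonality argument above, not by machine certification over automorphism-group orbits. So the gap is concrete: the proposal lacks the odd/even parity idea, the identification of the new points as full-support (deep-hole) vectors indexed by dual codewords, and the distance bounds (Golay shortenings for $19\le n\le 21$, the LP-optimal $192$-word subcode for $n=17$) that make the counts come out.
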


Ho \cite{H26} has since improved the lower lower in dimension $19$ to $11948$ by
combining our techniques with an improved choice of error-correcting code.

Our constructions cannot be obtained as cross sections of the Leech lattice
minimal vectors (via a hyperplane through the origin), as we will see from their inner products.
We cannot rule out the possibility that there might
be other cross sections larger than Leech's or even our constructions, but we expect that Leech's
cross sections are as large as possible.

To prove Theorem~\ref{thm:main}, we modify the signs in the known constructions to produce space for additional spheres. It is plausible that this approach could work in other dimensions, with $22$ dimensions being a reasonable target, but we have not had any luck in that case.

The proof of Theorem~\ref{thm:main} does not depend on any computer calculations. However, we have made available files with explicit coordinates for our kissing configurations as part of the data set \cite{C} of record kissing configurations. These files can be verified algorithmically, which provides an additional check that we have not overlooked anything in our proofs.

In the next section, we describe the simplest case, namely dimensions $19$ through $21$. We then proceed in Sections~\ref{sec:17} and~\ref{sec:18} to the modifications required in $17$ and $18$ dimensions. Finally, we conclude with a discussion of potential generalizations in Section~\ref{sec:gen}.

\section{Dimensions 19, 20, and 21}
\label{sec:19-21}

We begin with a relatively simple construction of the previously known kissing configurations in dimensions $19$ through $21$. Suppose $C$ is a binary error-correcting code of block length~$n$, constant weight~$8$, and minimal distance~$8$. Then there is a kissing configuration in $\R^n$ of size $2^2 \binom{n}{2} + 2^7 |C|$, which consists of the $2^2 \binom{n}{2}$ permutations of $(\pm 2, \pm 2, 0, \dots, 0)$ and the $2^7 |C|$ vectors that put an even number of minus signs on the codewords in $C$. Each of these vectors has squared norm~$8$, and it is straightforward to check that their inner products are all at most $4$ (different codewords in $C$ overlap in at most four coordinates, and different sign patterns for the same codeword differ in at least two signs).

To obtain a suitable code $C$, we start with the $S(5,8,24)$ Steiner system, which achieves $|C|=759$ when $n=24$. We then repeatedly shorten it to include only codewords with $0$ in their last coordinate, after which we drop that coordinate. The resulting sizes are shown in Table~\ref{table:C}; note that these sizes depend on the choice of deleted coordinates beyond five deletions. This process recovers the largest code $C$ known for each block length $n$ except $n=17$, for which $|C|=34$ can be achieved (see \cite{B,BSSS90} for a compilation of records).

\begin{table}
\caption{The sizes of the shortened $S(5,8,24)$ Steiner system and the resulting kissing configuration in $n$ dimensions.}
\label{table:C}
\begin{tabular}{rrr}
\toprule
$n$ & $|C|$ & $2^2 \binom{n}{2} + 2^7 |C|$\\
\midrule
$16$ & $30$ & $4320$\\
$17$ & $30$ & $4384$\\
$18$ & $46$ & $6500$\\
$19$ & $78$ & $10668$\\
$20$ & $130$ & $17400$\\
$21$ & $210$ & $27720$\\
$22$ & $330$ & $43164$\\
$23$ & $506$ & $65780$\\
$24$ & $759$ & $98256$\\
\bottomrule
\end{tabular}
\end{table}

The kissing numbers $2^2 \binom{n}{2} + 2^7 |C|$ in Table~\ref{table:C} are generally worse than the previously known records, but they agree for $n=16$, $19$, $20$, and $21$. One can check that the construction described above is isometric to the minimal vectors of the laminated lattice $\Lambda_n$ in these cases.

The initial impetus for the present paper was the question of what happens if we use an odd number of minus signs in this construction, instead of an even number. This variation still produces a kissing configuration of the same size, for exactly the same reason. At first it sounds like a superficial change, and one might wonder whether the result is simply isometric to the configuration with an even number of minus signs. However, it is not. Instead, it is sufficiently different that it opens up space we can use to improve the kissing configurations with $19 \le n \le 21$.

To see why, consider the following construction over the field $\F_2$ with two elements. Suppose $c \in \F_2^n$ is orthogonal to the codewords in $C$ modulo~$2$ (for example, it could be the zero vector), and define $v \in \R^n$ by $v_i = (-1)^{c_i}\sqrt{8/n}$, so that $|v|^2=8$, which is the same normalization as the kissing configuration. We can enlarge the kissing configuration to include $v$ if $\langle v,w \rangle \le 4$ for all vectors $w$ already in the kissing configuration. When $w$ is a permutation of $(\pm 2, \pm 2, 0, \dots, 0)$, that inequality holds because $n\ge 8$. When $w$ is a signed vector with support in $C$, the vector $v$ must have an even number of minus signs in the support of $w$, because $c$ is orthogonal to the code $C$ modulo~$2$. Because $w$ has an odd number of minus signs, their signs must differ in at least one coordinate, and their inner product $\langle v,w \rangle$ is therefore at most
\[
7 \sqrt{\frac{8}{n}} - \sqrt{\frac{8}{n}} = 6 \sqrt{\frac{8}{n}},
\]
which is at most $4$ when $n \ge 18$.

Taking $c=0$ already improves the kissing number in $19$ through $21$ dimensions by one. (It fails to improve on $18$, $22$, or $23$ dimensions because the starting value from Table~\ref{table:C} is too low.) Note that this construction fails if we use an even number of minus signs in the initial kissing configuration, since no choice of $c$ can guarantee a sign difference between $v$ and $w$ in that case.

To prove Theorem~\ref{thm:main} for $n \ge 19$, we simply include as many of these vectors $v$ as we can. Suppose $c$ and $c'$ in $\F_2^n$ lead to vectors $v$ and $v'$ of squared norm~$8$ as above, and the Hamming distance between $c$ and $c'$ is $d$. Then 
\[
\langle v,v' \rangle = (n-d) \cdot \frac{8}{n} - d \cdot \frac{8}{n},
\]
which is at most $4$ when $d \ge n/4$. In other words, we can obtain additional vectors from any subcode of the orthogonal complement of $C$ that has minimal distance at least $n/4$.

To find such a code, we begin with the extended binary Golay code, a linear code of block length~$24$ and dimension~$12$. It is self-dual, and its weight~$8$ codewords are the $S(5,8,24)$ Steiner system. Thus, there are $2^{12}=4096$ codewords orthogonal to the Steiner system when $n=24$, with minimal distance~$8$ between them. Puncturing this code (i.e., deleting coordinates) gives $4096$ codewords with minimal distance~$6$ when $n=22$. Now shortening the code (i.e., restricting to codewords with zero in the last coordinate and then deleting that coordinate) gives $2048$ codewords with minimal distance~$6$ when $n=21$. Puncturing gives $2048$ with minimal distance~$5$ when $n=20$, and shortening that gives $1024$ with minimal distance~$5$ when $n=19$. These minimal distances are all at least $n/4$, and so we obtain the improvements from Theorem~\ref{thm:main}.

The resulting kissing configurations cannot be cross sections of the Leech lattice minimal vectors, because they have inner products that cannot occur in such a cross section. For example, $4\sqrt{8/n}$ is such an inner product. If we normalize the vectors to be unit vectors, this inner product becomes $\sqrt{2/n}$, which is not within the set $\{\pm 1, \pm 1/2, \pm 1/4, 0\}$ of normalized inner products among the Leech lattice minimal vectors. More generally, the inner products in our $n$-dimensional kissing configuration with $19 \le n \le 21$ consist of the inner products $\pm 1, \pm 1/2, \pm 1/4, 0$ from the Leech lattice, the irrational inner products given by $\sqrt{2/n}$ times $\pm1/2, \pm 1, \pm 3/2$, and the inner products $1-2k/n$ with $d \le k \le 16$, where $d$ is the minimal distance of the code from the previous paragraph (except $k=9$ and $k=13$ are omitted when $n=21$, because those distances do not occur in the code).

To produce improvements when $n=17$ or $18$, we must take a somewhat different approach. For those cases, we begin with the $16$-dimensional kissing configuration that uses an odd number of signs, which we call the \emph{odd kissing configuration}. We do not know how to use it to obtain an improvement in $16$ dimensions, but we can extend it to larger configurations in $17$ and $18$ dimensions. It was previously known that the kissing configuration in $16$ dimensions was not unique (see, for example, \cite[p.~2268]{CJKT11}), but as far as we know, variants of it had not proved useful for other constructions.

\section{Dimension 17}
\label{sec:17}

To obtain an improvement in $17$ dimensions, we begin by examining $16$ dimensions. First, we will carefully study the binary codes that arise in constructing the odd $16$-dimensional kissing configuration.  In doing so, we will identify $\F_2^{16}$ with the space $\F_2^{4 \times 4}$ of $4 \times 4$ binary matrices. 

Let $C_{10} \subseteq \F_2^{4 \times 4}$ be the code consisting of matrices whose rows and columns all have the same sum in $\F_2$ (the same for both rows and columns, not rows and columns separately), and let $C_6 := C_{10}^{\perp}$ be its dual code. Let $G$ be the automorphism group of $C_{10}$ and $C_6$, and let $G_0$ be the subgroup of $G$ generated by permutations of the rows, permutations of the columns, and the transpose map, with $|G_0| = 2 \cdot 4!^2 = 1152$.

Note that $C_{10}$ is a linear code of dimension~$10$, since there are only six linearly independent constraints: the four row sums automatically have the same total as the four column sums. The dual code $C_6 = C_{10}^\perp$ has size $64$ and contains $\binom{4}{2}^2/2 = 18$ matrices of the form
\[
\begin{bmatrix}
1 & 1 & 0 & 0\\
1 & 1 & 0 & 0\\
0 & 0 & 1 & 1\\
0 & 0 & 1 & 1
\end{bmatrix}
\]
(i.e., equivalent to this matrix under the action of $G_0$),
$2 \binom{4}{2} = 12$ of the form
\[
\begin{bmatrix}
1 & 1 & 1 & 1\\
1 & 1 & 1 & 1\\
0 & 0 & 0 & 0\\
0 & 0 & 0 & 0
\end{bmatrix},
\]
$4^2 = 16$ of the form
\[
\begin{bmatrix}
0 & 1 & 1 & 1\\
1 & 0 & 0 & 0\\
1 & 0 & 0 & 0\\
1 & 0 & 0 & 0
\end{bmatrix},
\]
$4^2 = 16$ of the form
\[
\begin{bmatrix}
1 & 0 & 0 & 0\\
0 & 1 & 1 & 1\\
0 & 1 & 1 & 1\\
0 & 1 & 1 & 1
\end{bmatrix},
\]
and also the zero matrix and the all $1$'s matrix. Note that $C_6 \subseteq C_{10}$. We will refer to these four forms as \emph{squares}, \emph{pairs}, \emph{crosses}, and \emph{anticrosses}. One can check that the crosses generate $C_6$ as an $\F_2$-vector space, as do the anticrosses, while the squares and pairs together do not.

Each element of $G_0$ is an automorphism of $C_{10}$ and $C_6$, but it is not the full automorphism group $G$. Instead, $G$ is a group of order $10|G_0| = 11520$, which is generated by $G_0$ and the involution
\[
\begin{bmatrix}
x_0 & x_1 & x_2 & x_3\\
x_4 & x_5 & x_6 & x_7\\
x_8 & x_9 & x_{10} & x_{11}\\
x_{12} & x_{13} & x_{14} & x_{15}
\end{bmatrix}
\mapsto
\begin{bmatrix}
x_0 & x_1 & x_7 & x_6\\
x_4 & x_5 & x_3 & x_2\\
x_{13} & x_{12} & x_{10} & x_{11}\\
x_9 & x_8 & x_{14} & x_{15}
\end{bmatrix}.
\]
We are grateful to the authors of \cite{MZLLCMCQY25} for pointing out this additional generator to us after we mistakenly claimed that $G=G_0$. The group $G$ was first computed by Jordan \cite{J69} as the automorphism group of the Kummer configuration, which is the set of crosses, and the additional generator is the permutation $B$ in his paper; see also \cite{AS70} for a modern description of $G$. 

We can describe the odd $16$-dimensional kissing configuration as follows using $4320$ vectors of squared norm~$8$ in $\R^{16}$, which we view as $\R^{4 \times 4}$ as above:
\begin{enumerate}
\item The $\binom{16}{2} \cdot 2^2 = 480$ vectors with two $\pm2$'s and $0$'s elsewhere.

\item The $(12+18) \cdot 2^7 = 3840$ vectors with entries $\pm1$ and $0$, where the support is given by the pair and square vectors in $C_6$ and the number of minus signs is odd.
\end{enumerate}
It is a valid kissing configuration because any two vectors of type~(2) with distinct supports have at most four nonzero coordinates in common (the square and pair vectors have minimal distance $8$ between them), while those with the same support differ in at least two signs.

It is not hard to extend this kissing configuration to $5346$ vectors in $\R^{17}$, which we view as $\R^{4 \times 4} \times \R$:
\begin{enumerate}
\item The $4320$ odd kissing vectors in $\R^{4 \times 4} \times \{0\}$.

\item The $4^2 \cdot 2^5 \cdot 2 = 1024$ vectors $(v,\pm \sqrt{2})$, where $v \in \R^{4 \times 4}$ has entries $\pm 1$ and $0$, with support given by the cross vectors in $C_6$ and an odd number of minus signs.

\item The $2$ vectors $(0,\pm \sqrt{8})$, with $0$ being the zero vector in $\R^{4 \times 4}$.
\end{enumerate}
This configuration matches the size of Leech's construction in $17$ dimensions, and a short case analysis as in the previous paragraphs shows that it is a valid kissing configuration, because $C_6$ has minimal distance~$6$.

For comparison, Leech's construction can be obtained by using even numbers of signs in the first sixteen coordinates, rather than odd. The even configuration is maximal, in the sense that there is no room to enlarge it. To check that the deep holes are not large enough to add any points, one can compute the facets of the convex hull of the $4320$ points using \cite{D} (see also \cite{DD16,D22}). Each facet corresponds to a hole, and checking all the facets shows that no holes are deep enough for additional points.

By contrast, the odd configuration can be enlarged, using points of the form
\[
((-1)^{c_1}2/3, \dots, (-1)^{c_{16}} 2/3, \pm \sqrt{8}/3),
\]
where the last sign is arbitrary and $(c_1,\dots,c_{16}) \in C_{10}$. In fact, one can check using~\cite{D} that these points are exactly the deep holes in the configuration, although our construction does not depend on this assertion.
To verify that these points have inner product at most~$4$ with the points so far, we simply use the fact that $C_{10}$ is orthogonal to $C_6$ modulo~$2$: for the vectors of types (1), (2), and (3) above, we get inner products of at most
\[
\frac{2}{3} \cdot 1 \cdot 7 - \frac{2}{3} \cdot 1 \cdot 1 = 4,
\]
\[
\frac{2}{3} \cdot 1 \cdot 5 - \frac{2}{3} \cdot 1 \cdot 1 + \sqrt{2} \cdot \frac{\sqrt{8}}{3} = 4,
\]
and
\[
\sqrt{8} \cdot \frac{\sqrt{8}}{3} = \frac{8}{3} < 4,
\]
respectively.

Not all of these new vectors can be used simultaneously. For notational simplicity, we will index the vector
\[
((-1)^{c_1}2/3, \dots, (-1)^{c_{16}} 2/3, \pm \sqrt{8}/3),
\]
by $(c,\pm 1$). Then the compatibility relationships between these potential additional points are as follows: 
\begin{enumerate}
    \item The vectors indexed by $(c,1)$ and $(c, -1)$ are too close. 
    \item The vectors indexed by $(c,s)$ and $(c', s)$ are too close when $d(c,c') \leq 4$.
\end{enumerate}
Thus, the maximum possible number of vectors that can be added to this configuration is the sum of the sizes of two disjoint subcodes of $C_{10}$ that each have minimal greater than~$4$ (equivalently, at least~$6$, since $C_{10}$ is an even code). Lemma~\ref{lemma:192} gives an increase of $192 \cdot 2 = 384$, to achieve 
\[5346+384=5730
\]
in total. This proves Theorem~\ref{thm:main} for $17$ dimensions.

\begin{lemma} \label{lemma:192}
The largest possible size for a subcode of $C_{10}$ with minimal distance~$6$ is $192$, and in fact $C_{10}$ contains two disjoint such codes.
\end{lemma}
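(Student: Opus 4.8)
The plan is to translate the lemma into a packing problem and treat the two bounds separately. Since $C_{10}$ is an even code with no codewords of weight~$2$, a subset $S \subseteq C_{10}$ has minimal distance at least~$6$ exactly when no two of its elements differ by a codeword of weight~$4$. I would begin by classifying these weight-$4$ codewords: such a matrix has either all row and column sums even, making it one of the $36$ \emph{rectangles} (a $2\times 2$ all-ones block, up to the action of $G$), or all row and column sums odd, making it one of the $24$ permutation matrices. Thus $S$ is precisely an independent set in the Cayley graph on $C_{10}$ whose connection set consists of these $60$ vectors. A convenient bookkeeping fact, which I would verify using $G$, is that these $60$ vectors are spread exactly four to a coset over the $15$ nonzero cosets of $C_6$, the rectangles occupying $9$ of them and the permutation matrices the remaining~$6$.

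For the lower bound I would exhibit $192$ codewords explicitly and check, via the classification above, that no two differ by a rectangle or a permutation matrix. The structure guiding the construction comes from the even subcode $E \subseteq C_{10}$ of matrices with all row and column sums even: identifying $E$ with $\F_2^3 \otimes \F_2^3$, the weight-$4$ vectors become the rank-one tensors $\bar\rho\,\bar\gamma^{\mathsf T}$ with $\bar\rho,\bar\gamma$ not equal to the all-ones vector, and reducing the two factors modulo the all-ones vector carries them to the nonzero rank-one matrices of $\F_2^{2\times 2}$. Pulling back a rank-distance-two (MRD) code of size~$4$ through the induced map $E \to \F_2^{2\times 2}$ already furnishes a distance-$6$ subset of $E$ of the optimal size~$128$, and a full configuration of $192$ is then assembled by combining even and odd codewords, the only cross-constraints being permutation-matrix differences between $E$ and $O = C_{10}\setminus E$. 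Two disjoint copies are produced in the same step, either by translating by a codeword outside the difference set $S - S$ or, more safely, by building both codes at once inside a common symmetric partition so that disjointness holds by design.

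The upper bound is the heart of the matter. The plain eigenvalue (Hoffman) bound is too weak: the graph has an eigenvalue equal to $-20$, coming from the character of a $\pm1$ Hadamard matrix with a single row negated, so that bound is at least $256$; the same value $256$ arises from one algebraic relation, namely three rectangles that together with $0$ form an order-four subgroup of $C_{10}$, which shows $S$ is disjoint from three of its translates. To reach the sharp value $192$ I would use the Delsarte linear programming bound, exploiting the large symmetry group---the order-$1152$ group $G$ together with all translations by $C_{10}$---to collapse the program to a handful of variables and to exhibit a dual certificate of value~$192$. The main obstacle is producing that certificate: it must combine the many translate-disjointness constraints with the rigidity of maximum distance-$6$ sets in the even and odd halves (each of the optimal size~$128$, with essentially Hadamard structure) so as to charge the even/odd interaction through permutation differences at full strength.
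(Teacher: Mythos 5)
Your reformulation as an independent-set problem in the Cayley graph on $C_{10}$ with the $60$ weight-$4$ codewords ($36$ rectangles and $24$ permutation matrices) as connection set is correct, and your overall strategy for the upper bound --- symmetrize the Delsarte linear program under $G$ and translations and exhibit a dual certificate of value $192$ --- is exactly what the paper does (with $17$ orbits and an explicit dual multiplier vector built from the invariant positive semidefinite kernels $K_\chi$). But you stop at precisely the point where the proof lives: you acknowledge that "the main obstacle is producing that certificate" and do not produce it. Since you also verify that the plain Hoffman/ratio bound only gives $256$, the certificate is not a routine afterthought; without it the upper bound is unproved.

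The lower-bound construction has a more concrete problem. The LP equality conditions (complementary slackness against the paper's certificate) force the orbit enumerator of any $192$-element distance-$6$ subcode uniquely, and in particular force the numbers of difference pairs landing in the even subcode $E$ and its complement $O$ to be $96\cdot 192$ each; writing $a=|C\cap E|$ and $b=|C\cap O|$ this gives $a^2+b^2=2ab$, i.e.\ $a=b=96$. So your plan of keeping a $128$-element MRD-pullback inside $E$ and adjoining $64$ codewords from $O$ cannot reach $192$: any code containing $128$ even codewords is capped strictly below $192$. (The MRD pullback itself is a nice observation about the even half, but it is a dead end for this lemma.) The paper instead reads off from the forced enumerator that $A_7$ and $A_8$ equal the full orbit sizes, so an optimal code is closed under adding pairs and squares and hence is a union of six cosets of $C_5=\RM(1,4)$; choosing six pairwise-distance-$6$ cosets then reduces to a small combinatorial problem in $\F_2^6$ whose two complementary solutions give the two disjoint codes. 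Your proposal for disjointness ("translating by a codeword outside the difference set" or "building both codes at once") is likewise not substantiated; you would need to exhibit such a translate or partition explicitly.
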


\begin{proof}
First, we prove an upper bound of $192$ using the linear programming bound for binary codes. Let $C$ be any subcode of $C_{10}$.
For each orbit $i$ of $G$ acting on $C_{10}$ (these orbits are enumerated in Table~\ref{table:orbits}, with the first column listing orbits~$1$ through~$5$ and the second listing $6$ through~$10$), let 
\[
A_i = |\{(x,y) \in C^2 : \text{$x-y$ is in orbit $i$}\}|/|C|. 
\]
In other words, $A_i$ is the average over $x \in C$ of the number of $y \in C$ with $x-y$ in orbit~$i$.
Thus, we obtain a vector $(A_1,\dots,A_{10})$ with $A_i \ge 0$ for all $i$, $A_1=1$, and $A_1+\dots+A_{10} = |C|$. We call this vector the \emph{orbit enumerator} of $C$.

\begin{table}
\caption{The orbits of $G$ acting on $C_{10}$. Each entry lists the Hamming weight of the matrices in the orbit,
the size of the orbit, and an orbit representative matrix.}
\label{table:orbits}
\centering
\begin{tabular}{cccccc}
\toprule
weight & size & matrix & weight & size & matrix\\
\cmidrule(r){1-3} \cmidrule(lr){4-6}
$0$ & $1$ & $\begin{bsmallmatrix}0&0&0&0\\0&0&0&0\\0&0&0&0\\0&0&0&0\end{bsmallmatrix}$
 &
$8$ & $360$ & $\begin{bsmallmatrix}1&1&0&0\\1&0&1&0\\0&1&0&1\\0&0&1&1\end{bsmallmatrix}$
\\
\addlinespace
$4$ & $60$ & $\begin{bsmallmatrix}1&1&0&0\\1&1&0&0\\0&0&0&0\\0&0&0&0\end{bsmallmatrix}$
 &
$10$ & $16$ & $\begin{bsmallmatrix}1&0&0&0\\0&1&1&1\\0&1&1&1\\0&1&1&1\end{bsmallmatrix}$
\\
\addlinespace
$6$ & $16$ & $\begin{bsmallmatrix}0&1&1&1\\1&0&0&0\\1&0&0&0\\1&0&0&0\end{bsmallmatrix}$
 &
$10$ & $240$ & $\begin{bsmallmatrix}0&0&1&1\\0&1&0&1\\1&0&0&1\\1&1&1&1\end{bsmallmatrix}$
\\
\addlinespace
$6$ & $240$ & $\begin{bsmallmatrix}1&1&0&0\\1&0&1&0\\0&1&1&0\\0&0&0&0\end{bsmallmatrix}$
 &
$12$ & $60$ & $\begin{bsmallmatrix}0&0&1&1\\0&0&1&1\\1&1&1&1\\1&1&1&1\end{bsmallmatrix}$
\\
\addlinespace
$8$ & $30$ & $\begin{bsmallmatrix}1&1&0&0\\1&1&0&0\\0&0&1&1\\0&0&1&1\end{bsmallmatrix}$
 &
$16$ & $1$ & $\begin{bsmallmatrix}1&1&1&1\\1&1&1&1\\1&1&1&1\\1&1&1&1\end{bsmallmatrix}$
\\
\bottomrule
\end{tabular}
\end{table}

A \emph{positive semidefinite kernel on $C_{10}$} is a function $K \colon C_{10} \times C_{10} \to \R$ that defines a $2^{10} \times 2^{10}$ positive semidefinite matrix indexed by $C_{10}$, and we say $K$ is \emph{invariant} if $K(x,y)$ depends only on the $G$-orbit of $x-y$. In that case, we write $K(i)$ for the value of $K(x,y)$ when $x-y$ is in orbit~$i$, and we write $\mathcal{K}$ for the $2^{10} \times 2^{10}$ matrix with the $(x,y)$ entry being $K(x,y)$.

The characters of the abelian group $C_{10}$ are the homomorphisms from $C_{10}$ to $\{\pm 1\}$. If $\chi$ is any character, then
\[
K_\chi(x,y) = \frac{1}{|G|}\sum_{g \in G} \chi\big(g(x-y)\big)
\]
defines an invariant positive semidefinite kernel $K_\chi$, and these kernels span the cone of invariant positive semidefinite kernels. There are $10$ such kernels, corresponding to the orbits of $G$ on the group of characters of $C_{10}$. Specifically, for each $u \in \F_2^{4 \times 4}$, there is a character $\chi_u$ defined by $\chi_u(v) = (-1)^{\langle u,v \rangle}$. Since $C_6 = C_{10}^\perp$, the group of characters is isomorphic to $\F_2^{4 \times 4}/C_6$. The $G$-orbits of characters have the following representatives:
\[
\begin{split}
&\begin{bsmallmatrix}0&0&0&0\\0&0&0&0\\0&0&0&0\\0&0&0&0\end{bsmallmatrix},\ 
\begin{bsmallmatrix}0&0&0&0\\0&0&0&0\\0&0&0&0\\0&0&0&1\end{bsmallmatrix},\ 
\begin{bsmallmatrix}0&0&0&0\\0&0&0&0\\0&0&0&0\\0&0&1&1\end{bsmallmatrix},\ 
\begin{bsmallmatrix}0&0&0&0\\0&0&0&0\\0&0&0&0\\0&1&1&1\end{bsmallmatrix},\ 
\begin{bsmallmatrix}0&0&0&0\\0&0&0&0\\0&0&0&0\\1&1&1&1\end{bsmallmatrix},\\
&\begin{bsmallmatrix}0&0&0&0\\0&0&0&0\\0&0&0&1\\0&0&1&1\end{bsmallmatrix},\
\begin{bsmallmatrix}0&0&0&0\\0&0&0&0\\0&0&0&1\\0&1&1&1\end{bsmallmatrix},\
\begin{bsmallmatrix}0&0&0&0\\0&0&0&0\\0&0&1&1\\0&0&1&1\end{bsmallmatrix},\
\begin{bsmallmatrix}0&0&0&0\\0&0&0&1\\0&0&1&0\\0&1&1&1\end{bsmallmatrix},\
\begin{bsmallmatrix}0&0&0&0\\0&0&1&1\\0&1&0&1\\1&0&0&1\end{bsmallmatrix}.
\end{split}
\]

If $\chi = \chi_u$ and $v \in C_{10}$ is a representative of orbit~$i$ for the action of $G$ on $C_{10}$, then
\[
K_\chi(i) = \frac{1}{|G|} \sum_{g \in G} (-1)^{\langle gu, v \rangle}.
\]
Table~\ref{table:psdk} lists these values, with each row corresponding to one of the character orbit representatives listed above, in order.

\begin{table}
\caption{The invariant positive semidefinite kernels on $C_{10}$. Each row describes a kernel $K \colon C_{10} \times C_{10} \to \R$ by listing the value of $K(x,y)$ when $x-y$ is in each of the $10$ orbits from Table~\ref{table:orbits}, ordered as in that table.}
\label{table:psdk}
\centering
\begin{tabular}{cccccccccc}
$1$ & $1$ & $1$ & $1$ & $1$ & $1$ & $1$ & $1$ & $1$ & $1$\\ \addlinespace
$1$ & $\tfrac{1}{2}$ & $\tfrac{1}{4}$ & $\tfrac{1}{4}$ & $0$ & $0$ & $-\tfrac{1}{4}$ & $-\tfrac{1}{4}$ & $-\tfrac{1}{2}$ & $-1$\\ \addlinespace
$1$ & $\tfrac{1}{5}$ & $0$ & $0$ & $-\tfrac{1}{15}$ & $-\tfrac{1}{15}$ & $0$ & $0$ & $\tfrac{1}{5}$ & $1$\\ \addlinespace
$1$ & $\tfrac{1}{10}$ & $-\tfrac{1}{4}$ & $-\tfrac{1}{20}$ & $0$ & $0$ & $\tfrac{1}{4}$ & $\tfrac{1}{20}$ & $-\tfrac{1}{10}$ & $-1$\\ \addlinespace
$1$ & $\tfrac{1}{5}$ & $-1$ & $-\tfrac{1}{5}$ & $1$ & $\tfrac{1}{5}$ & $-1$ & $-\tfrac{1}{5}$ & $\tfrac{1}{5}$ & $1$\\ \addlinespace
$1$ & $-\tfrac{1}{30}$ & $\tfrac{1}{4}$ & $-\tfrac{1}{60}$ & $0$ & $0$ & $-\tfrac{1}{4}$ & $\tfrac{1}{60}$ & $\tfrac{1}{30}$ & $-1$\\ \addlinespace
$1$ & $-\tfrac{1}{15}$ & $0$ & $0$ & $-\tfrac{1}{15}$ & $\tfrac{1}{45}$ & $0$ & $0$ & $-\tfrac{1}{15}$ & $1$\\ \addlinespace
$1$ & $-\tfrac{1}{15}$ & $1$ & $-\tfrac{1}{15}$ & $1$ & $-\tfrac{1}{15}$ & $1$ & $-\tfrac{1}{15}$ & $-\tfrac{1}{15}$ & $1$\\ \addlinespace
$1$ & $-\tfrac{1}{6}$ & $-\tfrac{1}{4}$ & $\tfrac{1}{12}$ & $0$ & $0$ & $\tfrac{1}{4}$ & $-\tfrac{1}{12}$ & $\tfrac{1}{6}$ & $-1$\\ \addlinespace
$1$ & $-\tfrac{1}{3}$ & $-1$ & $\tfrac{1}{3}$ & $1$ & $-\tfrac{1}{3}$ & $-1$ & $\tfrac{1}{3}$ & $-\tfrac{1}{3}$ & $1$\\ \addlinespace
\end{tabular}
\end{table}

If $K$ is an invariant positive semidefinite kernel and $C$ is a subcode of $C_{10}$ with orbit enumerator $(A_1,\dots,A_{10})$, then
\[
\sum_{i=1}^{10} K(i) A_i = \frac{1}{|C|} \sum_{x,y \in C} K(x,y) = v_C^\top \mathcal{K} v_C \ge 0,
\]
where $v_C$ is the characteristic function of $C$, i.e., the vector indexed by $C_{10}$ with a $1$ for each element of $C$.
Thus, the orbit enumerator of a code $C$ with minimal distance~$6$ must satisfy the following constraints:
\begin{enumerate}
\item $A_1 = 1$,

\item $A_2 = 0$ (since that is the orbit with Hamming weight~$4$),

\item $A_i \ge 0$ for all $i$, and

\item $\sum_{i=1}^{10} K(i) A_i \ge 0$ for each invariant positive semidefinite kernel $K$ listed in Table~\ref{table:psdk}.
\end{enumerate}

Subject to these linear constraints, we wish to maximize $|C| = \sum_{i=1}^{10} A_i$, which is a linear function of the orbit enumerator.
Solving this linear program shows that $|C| \le 192$, and equality can be achieved only if
\begin{equation} \label{eq:A}
A = (1, 0, 0, 80, 30, 0, 0, 80, 0, 1).
\end{equation}
To see why, we can use a solution of the dual linear program; specifically, we will obtain this bound by taking a linear combination of the inequalities $\sum_{i=1}^{10} K(i) A_i \ge 0$ with nonnegative coefficients that are chosen to optimize the resulting bound.
Let $K_1,\dots,K_{10}$ be the kernels listed in Table~\ref{table:psdk}, and let the corresponding coefficients be
\[
c = \left(1, 13, 57, 56, 5, 27, 33, 0, 0, 0\right).
\]
Then the inequalities $\sum_{i=1}^{10} K_j(i) A_i \ge 0$ imply that
\begin{equation} \label{eq:LPdualineq}
\sum_{i=1}^{10} A_i = c_1 \sum_{i=1}^{10} K_1(i) A_i \le \sum_{j=1}^{10} c_j \sum_{i=1}^{10} K_j(i) A_i,
\end{equation}
and the right side of this inequality is
\[
192 A_1 + \frac{112}{5} A_2 - 8 A_3 - \frac{16}{15}A_6.
\]
Because $A_1=1$, $A_2=0$, and $A_i \ge 0$ for $i \ge 3$, we obtain an upper bound of $192$, with equality only when $A_3=A_6=0$. Furthermore, by complementary slackness \eqref{eq:LPdualineq} can be sharp only if $\sum_{i=1}^{10} K_j(i) A_i =0$ whenever $j>1$ and $c_j>0$ (in other words, for all $j$ from $2$ to $7$). Solving the linear equations from the previous two sentences for $A_1,\dots,A_{10}$ yields the unique solution \eqref{eq:A}.

This solution gives an important clue to construct a code $C$ achieving equality: $A_5=30$, which is the full size of orbit~$5$. In other words, the average number of neighbors in orbit~$5$ is the same as the maximum number, which means $C$ must be closed under adding elements of orbit~$5$. In the terminology we used for $C_6$, these orbits consist of pairs and squares. They generate a linear code $C_5$ of dimension~$5$, which also contains the zero vector and the all $1$'s vector (and is the Reed-Muller code $\RM(1,4)$). Thus, an optimal code $C$ must consist of six cosets of $C_5$.

There are $2^{10-5}=32$ cosets of $C_5$ in $C_{10}$. Each pair of distinct cosets has minimal distance~$4$ or~$6$ between them, and we want to find six cosets that are all at minimal distance~$6$ from each other. The coset representatives of $C_5$ in $C_{10}$ are spanned by the five matrices
\[
x_1=\begin{bsmallmatrix}0&0&0&0\\0&0&0&0\\0&0&1&1\\0&0&1&1\end{bsmallmatrix},\ 
x_2=\begin{bsmallmatrix}0&0&0&0\\0&0&0&0\\0&1&0&1\\0&1&0&1\end{bsmallmatrix},\ 
x_3=\begin{bsmallmatrix}0&0&0&0\\0&0&1&1\\0&0&0&0\\0&0&1&1\end{bsmallmatrix},\ 
x_4=\begin{bsmallmatrix}0&0&0&0\\0&1&0&1\\0&0&0&0\\0&1&0&1\end{bsmallmatrix},\ 
x_5=\begin{bsmallmatrix}0&0&0&1\\0&0&0&1\\0&0&0&1\\1&1&1&0\end{bsmallmatrix}.
\]
Define a linear function $f \colon C_{10}/C_5 \to \{v \in \F_2^6 : v_1+\dots+v_6 = 0\}$ by
\begin{align*}
f(x_1+C_5) &= (1,1,1,1,0,0),\\
f(x_2+C_5) &= (1,1,0,0,1,1),\\
f(x_3+C_5) &= (0,1,1,0,1,1),\\
f(x_4+C_5) &= (1,0,1,1,1,0),\ \text{and}\\
f(x_5+C_5) &= (1,1,1,1,1,1).
\end{align*}
Then one can check that two cosets have minimal distance~$4$ between them if and only if their images under $f$ have Hamming distance~$4$ between them. Now the question becomes which subsets of six points in $\{v \in \F_2^6 : v_1+\dots+v_6 = 0\}$ avoid Hamming distance~$4$. Such subsets are easy to describe: they are all obtained by translating and permuting the coordinates of the points $(0,0,0,0,0,0)$, $(1,1,0,0,0,0)$, $(1,0,1,0,0,0)$, $(1,0,0,1,0,0)$, $(1,0,0,0,1,0)$, and $(1,0,0,0,0,1)$. In particular, this set and its translation by $(1,1,1,1,1,1)$ are disjoint, and their preimages under $f$ give two disjoint sets of six cosets of $C_5$ in $C_{10}$ with minimal distance~$6$ between them, as desired.
\end{proof}

If we normalize the vectors in the kissing configuration to be unit vectors, then the inner products that occur among them are
\[
-1,\ -\frac{7}{9},\ -\frac{5}{9},\ -\frac{1}{2},\ -\frac{1}{3},\ -\frac{1}{4},\ -\frac{1}{6},\ -\frac{1}{9},\ 0,\ \frac{1}{9},\ \frac{1}{6},\ \frac{1}{4},\ \frac{1}{3},\ \frac{1}{2},\ 1.
\]
It follows that this configuration cannot be a cross section of the Leech lattice minimal vectors.

\section{Dimension 18}
\label{sec:18}

The case of $18$ dimensions is similar in spirit, but somewhat more complicated. We identify $\R^{18}$ with $\R^{4 \times 4} \times \R \times \R$. First, we will construct $7398$ vectors using odd signs, which matches Leech's construction. We begin with the original $5346$ odd kissing vectors in $\R^{4 \times 4} \times \{0\} \times \{0\}$. Note that we do not include the additional $384$ vectors, although we will use some of them later. Next, we include the four vectors $(0,\pm \sqrt{2}, \pm \sqrt{6})$, with $0$ being the zero vector in $\R^{4 \times 4}$. Finally, we include $2048$ vectors constructed as follows. Let $A$ be the set of $4^2 \cdot 2^5$ matrices given by cyclically rotating the rows and columns of
\[
\begin{bmatrix}
0 & 0 & 0 & 0\\
1 & 1 & 0 & 0\\
1 & 0 & 1 & 0\\
1 & 0 & 0 & 1
\end{bmatrix}
\]
and changing an odd number of $1$'s to $-1$,
and let $B$ be the analogous set starting with
\[
\begin{bmatrix}
1 & 1 & 1 & 0\\
0 & 0 & 1 & 0\\
0 & 1 & 0 & 0\\
1 & 0 & 0 & 0
\end{bmatrix}.
\]
Then we include the vectors $(a,\sqrt{2}/2, \sqrt{6}/2)$ or $(a,-\sqrt{2}/2,-\sqrt{6}/2)$, where $a$ is an element of $A$, and the vectors $(b,\sqrt{2}/2, -\sqrt{6}/2)$ or $(b,-\sqrt{2}/2,\sqrt{6}/2)$, where $b$ is an element of $B$. A case analysis shows that the resulting configuration is in fact a valid kissing configuration of size $7398$.

To achieve $7654$, we will include a subset of $256$ additional points, each of the form
\[
((-1)^{c_1}2/3, \dots, (-1)^{c_{16}} 2/3, \pm \sqrt{8}/3, 0).
\]
In other words, they are among the deep holes from the odd $17$-dimensional configuration. However, we cannot use as many of them as before, because they interfere with the other $18$-dimensional vectors. Instead, we will construct them using a code $C_8$.

Let $C_8 \subseteq \F_2^{4 \times 4}$ be the code consisting of matrices for which the rows, columns, main diagonal and shifted main diagonal all have the same sum in $\F_2$. Here the \emph{main diagonal} of a matrix in $\F_2^{4 \times 4}$ consists of the four cells marked by $*$ in
\[ \begin{bmatrix}
    * & ? & ? & ? \\ ? & * & ? & ? \\ ? & ? & * & ? \\ ? & ? & ? & *
\end{bmatrix},\]
and the \emph{shifted main diagonal} of a matrix in $\F_2^{4 \times 4}$ consists of the four cells marked by $*$ in
\[ \begin{bmatrix}
    ? & * & ? & ? \\ ? & ? & * & ? \\ ? & ? & ? & * \\ * & ? & ? & ?
\end{bmatrix}.\]
Then $C_8$ is an $8$-dimensional linear subcode of $C_{10}$. For each codeword $c \in C_8$, we include the vector
\[
((-1)^{c_1}2/3, \dots, (-1)^{c_{16}} 2/3, (-1)^{\langle c,v\rangle} \sqrt{8}/3, 0)
\]
in the kissing configuration, where
\[
v = \begin{bmatrix}
    1 & 1 & 0 & 0 \\ 1 & 1 & 0 & 0 \\ 0 & 0 & 0 & 0 \\ 0 & 0 & 0 & 0
\end{bmatrix}.
\]
Checking that this construction works requires some case analysis, but it is a straightforward computation, and it completes the proof of Theorem~\ref{thm:main}.

If we normalize these vectors to be unit vectors, then their inner products include every inner product in the list from $17$ dimensions, together with $\pm1/12$ and $\pm 5/12$. Thus, again they are not a cross section of the Leech lattice minimal vectors.

\section{Potential generalizations}
\label{sec:gen}

Our construction in $17$ dimensions is appealing, but we see no reason to believe it is truly optimal. Optimality seems even less likely in $19$, $20$, and $21$ dimensions, and particularly unlikely in $18$ dimensions. Furthermore, the optimal kissing configurations can probably not be obtained by fine-tuning our construction. Instead, they will presumably require a different idea. From this perspective, the value of our work lies in showing that the laminated lattices are not optimal, rather than in producing realistic candidates for optimality.

It is unclear to us what the limits of these sorts of constructions might be. It might be possible to use the $21$-dimensional construction to improve on $22$ or even $23$ dimensions, much like we used the odd $16$-dimensional kissing configuration to improve on $17$ and $18$ dimensions. Another tantalizing possibility would be to improve the Euclidean sphere packing density in any of these dimensions.

Among all the cases discussed in this paper, $16$ dimensions may be the most likely to be optimal. At the very least, the $4320$-point Barnes-Wall kissing configuration seems difficult to beat. However, the fact that it is not unique shows that it is not as special as $4$, $8$, or $24$ dimensions, for which uniqueness is known \cite{dLLdMK24,BS81}. In some sense the odd $16$-dimensional kissing configuration is more capacious than the Barnes-Wall configuration, because it leaves room to add more points in $17$ dimensions. Perhaps the ability to make room in this way is a clue that the Barnes-Wall configuration is itself suboptimal.

\end{document}